\documentclass[12pt,reqno]{amsart}

\usepackage{upgreek}     
\usepackage{enumitem}
\usepackage{graphicx}
\usepackage{mathrsfs}
\usepackage{amsthm}
\usepackage{amssymb}
\usepackage{amsmath,amsthm,amsfonts}
\usepackage{mathabx}
\usepackage{hhline}
\usepackage{textcomp}
\usepackage{amsmath,amscd}
\usepackage[all,cmtip]{xy}
\usepackage{mathtools}
\usepackage[margin=1in]{geometry}
\usepackage{lipsum}
\usepackage{extarrows}
\usepackage{upgreek}
\usepackage{bbm}
\usepackage[displaymath]{lineno}
\usepackage[singlespacing]{setspace}%
\setcounter{MaxMatrixCols}{30}
\usepackage[colorlinks=true,breaklinks=true,bookmarks=true,urlcolor=blue,
citecolor=blue,linkcolor=blue,bookmarksopen=false,draft=false]{hyperref}

\providecommand{\U}[1]{\protect\rule{.1in}{.1in}}

\theoremstyle{definition}
\newtheorem{theorem}{Theorem}[section]
\newtheorem{lemma}{Lemma}[section]

\newtheorem{defn}{Definition}[section]
\newtheorem*{theorem*}{Theorem}
\newtheorem{cor}{Corollary}[section]

\numberwithin{equation}{section}

\newcommand{\abs}[1]{\lvert#1\rvert}


\DeclareMathAlphabet{\mathpzc}{OT1}{pzc}{m}{it}

\newcommand{\ca}{\mathrm{Cat}}
\newcommand{\ftt}[1] {\mathsf{#1}}

\newcommand{\ps}{\mathrm{({PS}})_{\mathbf{G}}}
\newcommand{\pss}{\mathrm{({PS}})_c}
\newcommand{\dit}{\displaystyle{\int}}

\newcommand{\va}{\varphi}
\newcommand{\cs}{continuous }

\newcommand{\dd}{{\tt D}}
\newcommand{\dt}[1]{{\tt d}{#1}}
\newcommand{\fs}[1]{\mathbbm {#1}}
\newcommand{\eu}[1]{\EuScript {#1}}
\newcommand\Set[2]{\{\,#1\mid#2\,\}}

\newcommand*{\medcup}{\mathbin{\scalebox{1.5}{\ensuremath{\cup}}}}
\newcommand{\lc}{\mathsf{L}}

\newcommand{\h}{Hausdorff }

\newcommand{\mt}{\mathbbm {d}}

\newcommand{\zz}{\mathbb{Z}}

\newcommand{\set}[1]{\left\{#1\right\}}
\newcommand{\snorm}[2][]{\left\lVert#2\right\rVert_{#1}}

\newcommand{\zero}[1]{\mathbf{0}_{#1}}

\newcommand{\hh}{\mathcal{H}}
\newcommand{\nbd}{neighborhood \,}

\newcommand{\rr}{\mathbb{R}}
\newcommand{\nn}{\mathbb{N}}

\newcommand{\uu}{\mathcal{U}}

\newcommand{\f}{Fr\'{e}chet }

\newcommand{\bl}[1] {\mathbf {#1}}

\DeclareMathOperator{\Ind}{Ind}

\newcommand{\bb}{\mathcal{B}}
\DeclareMathAlphabet\EuScript{U}{eus}{m}{n}
\SetMathAlphabet\EuScript{bold}{U}{eus}{b}{n}
\newcommand\opn{\ensuremath{\mathrel{\mathpalette\opncls\circ}}}

\newcommand{\opncls}[2]{
	\ooalign{$#1\subseteq$\cr
		\hidewidth\raisefix{#1}\hbox{$#1{\stylefix{#1}#2}\mkern2mu$}\cr}}
\def\raisefix#1{
	\ifx#1\displaystyle
	\raise.39ex
	\else
	\ifx#1\textstyle
	\raise.39ex
	\else
	\ifx#1\scriptstyle
	\raise.275ex
	\else
	\raise.150ex
	\fi
	\fi
	\fi
}
\def\stylefix#1{
	\ifx#1\displaystyle
	\scriptstyle
	\else
	\ifx#1\textstyle
	\scriptstyle
	\else
	\ifx#1\scriptstyle
	\scriptscriptstyle
	\else
	\scriptscriptstyle
	\fi
	\fi
	\fi
}
\DeclareFontFamily{U}{mathx}{\hyphenchar\font45}
\DeclareFontShape{U}{mathx}{m}{n}{
	<5> <6> <7> <8> <9> <10>
	<10.95> <12> <14.4> <17.28> <20.74> <24.88>
	mathx10
}{}
\newcommand{\fr}{Fr\'{e}chet }

\newcommand{\Cl}[1]{\overline{#1}}

\renewcommand{\emptyset}{\varnothing}

\makeatletter
\@namedef{subjclassname@2020}{%
	\textup{2020} Mathematics Subject Classification}
\makeatother
\begin{document}

\title{Multiplicity theorems in Fr\'{e}chet setting}

\author{Kaveh Eftekharinasab}
\address{Topology lab.  Institute of Mathematics of National Academy of Sciences of Ukraine, Tereshchenkivska st. 3, Kyiv, 01601 Ukraine}

\email{kaveh@imath.kiev.ua}
\thanks {The author is grateful for the financial support within the program of support for priority research and technical (experimental) developments of the Section of Mathematics of the NAS of Ukraine for 2022-2023. Project “Innovative methods in the theory of differential equations, computational mathematics and mathematical modeling”, No. 7/1/241.  (State Registration No. 0122U000670)}


\subjclass[2020]{58E05,  
	58E40,  
	58K05 .
}


\keywords{\fr spaces and Finsler manifolds, Ljusternik-Schnirelmann category, the Palais-Smale condition, discrete group action.}

\begin{abstract}
We prove multiplicity theorems for Keller $ C_c^1 $-functionals on \fr spaces and Finsler manifolds 
which are invariant under the action of a discrete subgroup. For such functionals we evaluate the minimal number of critical points by applying the Lusternik-Schnirelmann category.
\end{abstract}

\maketitle
\section{Introduction}
Functions which are invariant under a group action on manifolds or linear spaces
may have more critical points overall. This is due to the fact that
an invariant function may be viewed as a function on the quotient space which commonly has the richer topology than the ambient space. The study of critical points of invariant functions in the Banach setting has a long history and involves  theories such as Ljusternik-Schnirelman category theory, genus and cogenus
theory and various topological index theories, etc, see for example \cite{chang,chang2,gp,hb}. 

In aforementioned theories, an effectual technique in obtaining critical points of a differentiable functional is based
on deformation arguments along gradient or  pseudogradient vector fields. Deformation flows  generated by such vector fields are broadly obtained by solving a Cauchy problem. 
The other method of locating critical points is based on variational principles among which the Ekeland variational principle is the  prominent one. It states the existence of a certain minimizing sequence on a complete metric space
along which we reach the infimum value of the minimization problem. 

An attempt to extend multiplicity results to more general context of \fr manifolds was made in \cite{k},
where the Ljusternik-Schnirelman category theorem was proved for \fr Finsler manifolds. As mentioned in \cite{k}, there
are several significant obstructions to extend the deformation technique to the \fr setting:  
lack of general solvability theory of ODEs. In a \fr
space a linear ODE may have no solution for any initial value condition. Besides,
if solutions to initial value problem exist they may not be unique. In addition, 
because of the deficient topological structures of the duals of \fr spaces, cotangent bundles
do not admit smooth manifold structures and consequently the notion of pseudogradient
vector fields and Finsler structures on cotangent bundles make no sense.

In this regard, in \cite{k} it was proved a deformation result which is not implemented by
considering  pseudogradient flows for the case of \fr Finsler manifolds. Also, it was introduced the Palais-Smale condition on manifolds by using
an auxiliary function to detour the need of a smooth structure on cotangent bundles. In this paper
by applying the deformation result  we extend the Ljusternik-Schnirelmann theorem for the case differentiable functionals on \fr Finsler manifolds which are
invariant under a compact lie group action (Theorem \ref{th:clg}). For the case \fr spaces an analogous deformation result 
and a Ljusternik-Schnirelmann theorem are not available yet. Therefore, we consider another multiplicity
problem, namely evaluating the minimal number of the critical orbits of a functional which is invariant under the action of a discrete group of a \fr space (see Theorem \ref{th:co}). To prove this theorem we employ the Ekeland variational principle.

\section{
	Preliminaries }
In this section we briefly recall the basic concepts of the theory of Fr\'{e}chet spaces and establish our notations.  

By $ U \opn \mathsf{T} $ we mean that $ U $
is an open subset of a topological space $ \mathsf{T} $.  If $ \mathsf{S}$ is another topological space, then we denote by $ \eu{C} (\mathsf{T},\mathsf{S}) $ the set of  \cs mappings from  $ \mathsf{T} $ into $ \mathsf{S}$.

We denote by $\fs{F}$ a Fr\'{e}chet space whose topology is defined by a sequence of seminorms $(\snorm[\fs{F},n]{\cdot})_{n \in \nn}$, 
which we can always assume to be increasing (by considering $\max_{k \leq n}\snorm[\fs{F},n]{\cdot}$, if necessary). Moreover, the complete translation-invariant metric
\begin{equation*}\label{dm}
\mt_{\fs{F}} (x,y) \coloneqq \sum _{n \in \nn} \dfrac{1}{2^n}\cdot \dfrac{\snorm[\fs{F},n]{x-y}}{1+ \snorm[\fs{F},n]{x-y}}
\end{equation*}
induces the same topology on $\fs{F}$. We denote by
$ \fs{B}_{\mt_{\fs{F}}}(x,r) $ an open ball with center $ x $ and radius $ r>0 $. We do not write the metric $ \uprho $
when it does not cause confusion.
In what follows we consider only \f spaces over the filed $ \rr $ of real numbers. Throughout the paper we assume that   $\fs{F},\fs{E}$ are Fr\'{e}chet spaces and $\lc(\fs{E},\fs{F})$ is the set of all continuous linear mappings from $\fs{E}$ to $\fs{F}$.

A bornology $\bb_F$ on $\fs{F}$ is a  covering of $ \fs{F} $  satisfying the following axioms:
\begin{enumerate}
	\item $ \bb_F $ is stable under finite unions;
	\item if $ A \in \bb_F  $ and $ B \subseteq A$, then $ B \in \bb_F $. 
\end{enumerate}
The compact bornology on $ \fs{F} $ is the family $ \bb_F^c $ of relatively compact subsets of $ \fs{F} $ having the set of all compact subsets of $ \fs{F} $ as a base, in the sense that every $B \in \bb_F^c$ is contained in some compact set.

Let $\bb_E^c$ be the compact bornology on $ E $.
We endow the vector space $\lc(\fs{E},\fs{F})$ with the $\bb_E^c$-topology which is the topology of uniform convergence on all compact subsets of $ \fs{E} $. This is a Hausdorff locally convex topology which can be defined by the family of all seminorms obtained as follows: 
$$
\snorm[B,n]{L}   \coloneq \sup \{ \snorm[F,n]{L (e)} :  e \in B \}, 
$$
where $ B \in \bb_E^c$ and $n \in \nn $.

Let $\varphi: U \opn \fs{E} \to \fs{F}$  be a mapping. If the directional  derivatives
$$\dd \varphi(x)h = \lim_{ t \to 0} \dfrac{\varphi(x+th)-\varphi(x)}{t}$$
exist for all $x \in U$ and all $ h \in \fs{E} $, and  the induced map  $\dd \varphi(x) : U \to \lc(\fs{E},\fs{F})$ is continuous for all
$x \in U$, then  we say that $ \varphi $ is a Keller $C_c^1 $ mapping (or differentiable of class Keller $C_c^1 $) see \cite{ke}. 

 A $C_c^1$-Fr\'{e}chet manifold $ \fs{M} $ is a  manifold modeled on a Fr\'{e}chet space $ \fs{F}$ with an atlas of coordinate 
charts  such that the coordinate transition functions are all
Keller $ C_c^{1} $-mappings.

\begin{defn}\label{defni}\cite{sn1}
	Let $\fs{F}$ be a Fr\'{e}chet space, $\ftt{T}$  a topological space and $V = \ftt{T} \times \fs{F}$ the trivial bundle with fiber $\fs{F}$ over $\ftt{T}$. A Finsler
	structure for $V$ is a collection of continuous functions $\snorm[V,n]{\cdot}: V \to \mathbb{R}^+$, $n \in \nn$, such that 
	\begin{description}
		\item[(F1)] For $b \in \ftt{T}$ fixed, $ \snorm[\fs{F},n]{x}^b \coloneq \snorm[V,n]{(b,x)}$ is a collection of seminorms 
		on $\fs{F}$ which gives the topology of $\fs{F}$.
		\item [(F2)]Given $ k >1$ and $x_0 \in \ftt{T}$, there exists a neighborhood $W$ of $x_0$  such that
		\begin{equation*} \label{ine}
		\dfrac{1}{k}\snorm[\fs{F},n]{x}^{x_0} \,\leq \,\snorm[\fs{F},n]{x}^{w}\, \leq k \snorm[\fs{F},n]{x}^{x_0}
		\quad
		\text{for all} \quad  w \in W, n \in \nn, x \in \fs{F}.
		\end{equation*}
	\end{description}
\end{defn}
Suppose $\fs{M}$ is a  $C_c^1$-Fr\'{e}chet manifold modeled on $\fs{F}$. 
Let $\uppi_\fs{M} : T\fs{M} \rightarrow \fs{M}$ be the tangent bundle and let $\snorm[\fs{M},n]{\cdot}: T\fs{M} \rightarrow \mathbb{R}^+$ be a collection of continuous functions, $n \in \nn$.  We say that
$\{\snorm[\fs{M},n]{\cdot}\}_{n \in \nn} $ is a Finsler structure for 
$T\fs{M}$ if for a given $x \in \fs{M}$ there exists a bundle chart $\psi : U \times \fs{F} \simeq T\fs{M}\mid_U$ with $x \in U$  
such that
$$\{\snorm[V,n]{\cdot} \circ \, \psi^{-1}\}_{n \in \nn} $$
is a Finsler structure for $V = U \times \fs{F}$.

A Fr\'{e}chet Finsler manifold is a Fr\'{e}chet manifold together with a Finsler structure on its tangent bundle. Regular 
(in particular paracompact) manifolds admit Finsler structures.

If $\{ \snorm[\fs{M},n]{\cdot} \}_{n \in \nn}$ is a Finsler structure for $\fs{M}$ then  we can obtain a graded Finsler structure, denoted  by $( \snorm[\fs{M},n]{\cdot} )_{n \in \nn}$, that is $\snorm[\fs{M},i]{\cdot} \leq
\snorm[\fs{M},i+1]{\cdot}$ for all $i \in \nn$.

We define the length of a $C^1$-curve $\gamma : [a,b] \rightarrow M$ with respect to the $n$-th component by 
\begin{equation*}
L_n(\gamma) = \int_a^b \snorm[\fs{M},n]{\gamma'(t)}^{\gamma(t)}\dt t.
\end{equation*}
The length of a  piecewise path  with respect to the $n$-th component is the sum over the curves constituting to the path. On each connected component of $\fs{M}$, the distance is defined by
\begin{equation*} 
\uprho_n (x,y) = \inf_{\gamma} L_n(\gamma),
\end{equation*}
where infimum is taken over all  piecewise $C^1$-curve connecting $x$ to $y$. Thus,
we obtain an increasing sequence of metrics $\uprho_n(x,y)$ and define the distance $\uprho$ by
\begin{equation}\label{finmetric}
\uprho (x,y) = \displaystyle \sum_{n = 1}^{\infty} \dfrac{1}{2^n} \cdot \dfrac {\uprho_n(x,y)}{ 1+ \uprho_n(x,y)}.
\end{equation}

The distance $\uprho$ defined by~\eqref{finmetric} is a metric for $\fs{M}$ which is bounded by 1. Furthermore, the topology induced by this metric coincides with the original topology of $\fs{M}$ (see~\cite{sn1}). 

\section{Multiplicity theorems for \fr spaces}
Let $ \bl{L} $ be a topological group with the identity element $ \bl{e} $. A continuous
action of $ \bl{L} $ on a \fr space  (or manifold) $ \fs{F} $ is a mapping $ \eu{A}: \bl{L} \times \fs{F} \to \fs{F} $,
$ \eu{A}(l,m) $ written as $ l\cdot m $, such that $ \bl{e} \cdot m =m $ and 
$ (l_1 * l_2) \cdot m = l_1 \cdot (l_2 \cdot m)$ for all $ l_1,l_2 \in \bl{L} $ and all $ m \in \fs{M} $ (here $ * $ denotes the operation of $ \bl{L} $).

A set $ A \subset \fs{F}  $ is called $ \bl{L} $-invariant, if $ l \cdot m \in A $
for all $ m \in A $ and all $ l \in \bl{L} $. A functional $ \va : \fs{F} \to \rr $ is called $ \bl{L} $-invariant 
if $ \va (l \cdot m) = \va (m) $ for all $ l \in \bl{L} $ and $ m \in \fs{M} $. A map
$ \upphi : \fs{F} \to \fs{F} $ is called $ \bl{L} $-equivalent if $ \upphi (l \cdot m) = l \cdot \upphi (m) $
for all $ m \in \fs{M} $ and all $ l \in \bl{L} $. The set of fixed points of $ \fs{F} $ under the action 
is defined as
$$ \mathrm{Fix}(\bl{L})  \coloneq \Set {x \in \fs{F}}{l \cdot x =x, \, \forall l \in  \bl{L}}.$$

Let $ \bl{G} $ be a discrete subgroup of a \fr space $ \fs{F} $ and let $ q : \fs{F} \to \fs{F}/\bl{G} $ be the canonical surjection. A subset $ A \subset \fs{F} $ is called $ q $-saturated if $ A = q^{-1} \circ q (A) $.
 Suppose the space $ \fs{F}_1 $ generated by $ \bl{G} $ has the dimension $ n $. Let $ \fs{F}_2 $ be a topological complement of $ \fs{F}_1 $, so that $ \fs{F} $ is isomorphic to $ \fs{F}_1  \times \fs{F}_2, \fs{F} \cong \fs{F}_1 \times \fs{F}_2$. Let $ \bl{T}^n $ be the $ n $-torus, then $ \bl{G} \cong \zz^n $
and $ q (\fs{F}) \cong  \bl{T}^n \times \fs{F}_2$.

Let $ \va : \fs{F} \to \rr $ be a $ \bl{G} $-invariant functional of class Keller $C_c^1 $, and $ c $ a critical point of $ \va $. The set $ q^{-1}(q(c)) $ consists of the critical points of $ \va $ (since $ \va'$ is $ \bl{G} $-invariant) and is called a critical orbit of $ \va $ through $ c $.

We will extend the Palais-Smale compactness condition (\cite[Definition 3.2]{k3}) to $ \bl{G} $-invariant functionals.

\begin{defn}
	Let $ \va : \fs{F} \to \rr $ be a $ \bl{G} $-invariant functional of class Keller $C_c^1 $. 
	We say that $ \va $ satisfies $ \ps $-condition if for every sequence $ (x_n) \subset \fs{F} $
	such that $ \va(x_n) $ is bounded and $ \va'(x_n) \to 0 $, the sequence
	$ (q (x_n)) $ contains a convergent subsequence.
\end{defn}
The Lusternik-Schnirelmann category $\ca_{\mathsf{T}}A$ of a subset $A$ of a topological space $\mathsf{T}$ is the minimal number of closed sets that cover $A$
and each of which is contractible to a point in $\mathsf{T}$. If $\ca_{\mathsf{T}}A$ is not finite, we write $\ca_{\mathsf{T}}A = \infty$.

The following result concerning the Lusternik-Schnirelmann category of subsets of ANR spaces will be applied in the proof of Lemma \ref{lm:come}.
\begin{theorem}[\cite{palais}, Theorem 6.3] \label{th:anr}
	Let $ \mathsf{T} $ be an ANR space and $ A \subseteq \mathsf{T} $. Then there exists a neighborhood
	$ \uu $ of $ A $ such that $ \ca_{\mathsf{T}} \Cl{U} = \ca_{\mathsf{T}} A $.
	\end{theorem}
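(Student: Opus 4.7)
My plan is to reduce to the case where $k := \ca_{\mathsf{T}} A$ is finite (the infinite case being vacuous with any open $\uu \supseteq A$) and then, piece by piece, fatten a minimal closed cover of $A$ into a closed neighborhood that still contracts inside $\mathsf{T}$. So I fix closed sets $A_1,\dots,A_k$ with $A \subseteq \bigcup_i A_i$ and continuous contracting homotopies $H_i : A_i \times [0,1] \to \mathsf{T}$ satisfying $H_i(\cdot,0) = \id_{A_i}$ and $H_i(\cdot,1) \equiv p_i \in \mathsf{T}$.

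The key move will be to assemble, for each $i$, a single continuous map on the closed subset
\[
C_i := (\mathsf{T} \times \{0\}) \cup (A_i \times [0,1]) \cup (\mathsf{T} \times \{1\}) \subseteq \mathsf{T} \times [0,1]
\]
that equals the identity on $\mathsf{T} \times \{0\}$, equals $H_i$ on $A_i \times [0,1]$, and equals the constant $p_i$ on $\mathsf{T} \times \{1\}$. The endpoint conditions on $H_i$ guarantee agreement on the overlaps $A_i \times \{0\}$ and $A_i \times \{1\}$, and $\mathsf{T} \times \{0\}$ is disjoint from $\mathsf{T} \times \{1\}$, so this triple glues to a continuous map. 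Since $\mathsf{T}$ is a (metric) ANR, I extend it to a continuous $\tilde G_i : W_i \to \mathsf{T}$ on some open neighborhood $W_i$ of $C_i$ in $\mathsf{T} \times [0,1]$; applying the tube lemma around the compact slice $A_i \times [0,1] \subseteq W_i$ then yields an open $U_i \supseteq A_i$ in $\mathsf{T}$ with $U_i \times [0,1] \subseteq W_i$. The restricted homotopy starts as $\id_{U_i}$ and ends at $p_i$, so $U_i$ itself is contractible in $\mathsf{T}$.

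After this, using normality of the metric space $\mathsf{T}$, I pick closed neighborhoods $B_i$ of $A_i$ with $A_i \subseteq \Int B_i \subseteq B_i \subseteq U_i$ (Urysohn or a distance-function sublevel set) and set $\uu := \bigcup_{i=1}^k \Int B_i$. Each $B_i$ is contractible in $\mathsf{T}$ by restricting $\tilde G_i$ to $B_i \times [0,1]$, and from $\Cl{\uu} \subseteq \bigcup_i B_i$ I conclude $\ca_{\mathsf{T}} \Cl{\uu} \le k$; the opposite inequality $\ca_{\mathsf{T}} A \le \ca_{\mathsf{T}} \Cl{\uu}$ is immediate from $A \subseteq \Cl{\uu}$ and monotonicity of category.

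The hard part will be the ANR invocation above, and specifically the insight to extend the \emph{triple} (identity at $t=0$, $H_i$ on $A_i \times [0,1]$, constant $p_i$ at $t=1$) as one map on a closed subset of $\mathsf{T} \times [0,1]$, rather than extending $H_i$ alone. Extending only $H_i$ produces a homotopy whose endpoint need not be constant on any neighborhood of $A_i$, which would then require a second ANR argument (a diagonal-neighborhood deformation) to patch. Packaging the three pieces together at the outset concentrates the entire topological content of the theorem into one use of the ANR extension property combined with the tube lemma, and this is precisely where the hypothesis on $\mathsf{T}$ is indispensable.
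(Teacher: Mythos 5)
The paper offers no proof of this statement---it is imported verbatim from Palais (\cite{palais}, Theorem~6.3)---so there is nothing internal to compare against, and your argument has to be judged on its own terms: it is correct. It is essentially the classical ANR argument. Palais's route is to show first that a closed subset of an ANR which is contractible in the ambient space has an open neighborhood contractible in the ambient space, and then to take a finite union over a minimal closed cover realizing the category; your proof does the same, and your decision to extend the triple (identity at $t=0$, the contraction $H_i$ on $A_i\times[0,1]$, the constant $p_i$ at $t=1$) in a \emph{single} application of the neighborhood-extension property is a genuine streamlining: it forces the time-one map of the extended homotopy to be constant on the whole tube, which eliminates the usual second step of shrinking $U_i$ so that its time-one image lands in a contractible neighborhood of $p_i$ via local contractibility of ANRs. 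Two wording points, neither of which is a gap: the ``tube lemma'' step does not require $A_i\times[0,1]$ to be compact---only compactness of the factor $[0,1]$ is used, since one takes the union over $a\in A_i$ of the individual tubes $V_a\times[0,1]\subseteq W_i$---and this is precisely what makes your argument valid for an arbitrary subset $A$ as in the statement; and the infinite case is not ``vacuous'' but follows from monotonicity, since $A\subseteq\Cl{\uu}$ already forces $\ca_{\mathsf{T}}\Cl{\uu}=\infty$, which is in effect what you use.
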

Let $\mathrm{Co(\fs{F})}$ be the set of compact subsets of $\fs{F}$. Define the sets
\begin{equation}
\mathcal{A}_i =  \Set{A \subset \fs{F}}{A \in \mathrm{Co(T)}, \ca_{q(\fs{F})} q(A) \geq i}, 
\end{equation}
for $i \in \nn$. In view of the property~\eqref{3} of Lemma~\ref{lem:cat} each $\mathcal{A}_i$ is a deformation invariant class of subsets of $T$.
The $i$-th Lusternik-Schnirelmann minimax value of $\varphi$ is defined by
\begin{equation}
\mu_i = \inf_{A \in \mathcal{A}_i} \sup_{x \in A}\varphi(x).
\end{equation}
It is easy to see that the sequence of numbers $\mu_i$ is increasing.

The proofs of the following two lemmas are based on the standard arguments, see for example
\cite[Lemma 3.2, Lemma 3.3]{su}. 

Let $ \mathrm{CB(\fs{F})} $ be the family of all nonempty closed and bounded subsets of $ \fs{F} $.
We define the \h metric $ \mt_{\bl{H}} $ on $ \mathrm{CB(\fs{F})} $ by
$$
\mt_{\bl{H}}(A,B) = \max \set{\sup_{a \in A} \mt_{\fs{F}}(a,B), \sup_{b \in B} \mt_{\fs{F}}(b,A)}.
$$
\begin{lemma}\label{lm:come}
	The space $ (\mathcal{A}_i, \mt_{\bl{H}}) $ is a complete metric space.
\end{lemma}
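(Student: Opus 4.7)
The plan is to let $(A_n)_{n\in\nn}$ be a Cauchy sequence in $(\mathcal{A}_i,\mt_{\bl{H}})$, identify its Hausdorff limit $A$ inside the complete metric space $(\fs{F},\mt_{\fs{F}})$, and then verify that $A$ belongs to $\mathcal{A}_i$. First I would note that $\mathcal{A}_i\subset\mathrm{Co}(\fs{F})\subset\mathrm{CB}(\fs{F})$, so $\mt_{\bl{H}}$ is well defined on $\mathcal{A}_i$. Completeness of the space of non-empty compact subsets under the Hausdorff metric is a classical fact whenever the base space is complete: a Cauchy sequence of non-empty compact sets is uniformly totally bounded, and its Hausdorff limit is closed and totally bounded, hence compact in $\fs{F}$. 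This delivers the candidate limit $A\in\mathrm{Co}(\fs{F})$.

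The non-trivial task is to certify that $\ca_{q(\fs{F})}q(A)\ge i$, so that $A$ really lies in $\mathcal{A}_i$. Since $q(\fs{F})\cong\bl{T}^n\times\fs{F}_2$ is a metrizable manifold that is locally a convex neighborhood in a Fr\'echet space, it is an ANR, and Theorem~\ref{th:anr} applies to the compact subset $q(A)$: it produces an open neighborhood $\uu$ of $q(A)$ in $q(\fs{F})$ with
$$\ca_{q(\fs{F})}\Cl{\uu}=\ca_{q(\fs{F})}q(A).$$
It then suffices to show that $q(A_n)\subset\Cl{\uu}$ for all $n$ large enough; monotonicity of the Lusternik--Schnirelmann category would yield the chain
$$i\le\ca_{q(\fs{F})}q(A_n)\le\ca_{q(\fs{F})}\Cl{\uu}=\ca_{q(\fs{F})}q(A),$$
finishing the proof.

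The main step, and in my view the principal obstacle, is transferring Hausdorff convergence $A_n\to A$ in $\fs{F}$ into the eventual inclusion $q(A_n)\subset\Cl{\uu}$ downstairs. I would handle it by the usual tube argument: since $A$ is compact and the preimage $q^{-1}(\uu)$ is an open set containing $A$, the continuous function $x\mapsto\mt_{\fs{F}}\bigl(x,\fs{F}\setminus q^{-1}(\uu)\bigr)$ attains a strictly positive minimum $\delta$ on $A$. Hausdorff convergence then gives $\mt_{\bl{H}}(A_n,A)<\delta$ for all large $n$, so every point of $A_n$ lies within $\delta$ of $A$ and hence inside $q^{-1}(\uu)$; applying $q$ yields the required inclusion $q(A_n)\subset\uu\subset\Cl{\uu}$. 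With this inclusion in hand, the monotonicity of category settles the completeness and demonstrates that the ANR-Theorem~\ref{th:anr} is precisely the ingredient preventing the category of $q(A)$ from dropping below $i$ in the Hausdorff limit.
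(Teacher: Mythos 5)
Your proposal is correct and follows essentially the same route as the paper: reduce to closedness of $\mathcal{A}_i$ in the complete hyperspace of compact sets, invoke Theorem~\ref{th:anr} on the ANR $q(\fs{F})\cong\bl{T}^n\times\fs{F}_2$ to get a neighborhood of $q(A)$ with the same category, show the $A_n$ eventually lie in its preimage, and conclude by monotonicity of the category. Your explicit tube argument for the eventual inclusion $A_n\subset q^{-1}(\uu)$ merely spells out a step the paper leaves implicit, so no substantive difference remains.
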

\begin{proof}
 The space $ (\mathrm{CB(\fs{F})},\mt_{\bl{H}}) $ is complete since $ \fs{F} $ is complete, cf.~\cite{mon}. 
 Thus, we only need to prove that $ \mathcal{A}_i $ is closed in  $ \mathrm{CB(\fs{F})} $. 
 Let $ (A_k) \subset \mathcal{A}_i $. Suppose $ A \in  \mathrm{CB(\fs{F})} $ and $ \mt_{\bl{H}} (A_k, A)  \to 0$. By \cite[Corollary 1.2.13]{van} and \cite [Proposition 1.2.14]{van} the space
 $ q (\fs{F}) \cong \bl{T}^n \times \fs{F}_2 \cong (\bl{S}^1)^n \times \fs{F}_2$ is an ANR.
 Therefore, by Theorem \ref{th:anr} there exists a closed neighborhood $ \uu $ of $ A $ such that
 $$
 \ca_{q(\fs{F})} (q(A)) = \ca_{q(\fs{F})} (\uu).
 $$
 As $ q^{-1}(\uu) $ is a closed neighborhood of the compact set $ A $, there exists $ k $ such that $A_k  \subset \uu$. Thereby,
 $$
 \ca_{q(\fs{F})} (q(A)) = \ca_{q(\fs{F})} (\uu) \geq \ca_{q(\fs{F})} (q(A_k)) \geq i.
 $$
 Therefore, $ A \in \mathcal{A}_i $.
\end{proof}
\begin{lemma}\label{lem:semi}
	Let $ \va : \fs{F} \to \rr $ be a $ \bl{G} $-invariant functional of class Keller $C_c^1 $.
	Then, the function 
	\begin{gather*}
		\uppsi: \mathcal{A}_i \to \rr \\
		\uppsi (A) = \max_{x \in A} \va(x)
	\end{gather*}
	is lower semicontinuous.   
\end{lemma}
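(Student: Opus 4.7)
To show lower semicontinuity of $\uppsi$ at a point $A \in \mathcal{A}_i$, I plan to verify the $\epsilon$-$\delta$ formulation: given $\epsilon > 0$, exhibit $\delta > 0$ such that any $B \in \mathcal{A}_i$ with $\mt_{\bl{H}}(A,B) < \delta$ satisfies $\uppsi(B) > \uppsi(A) - \epsilon$. Because $A$ is compact (every element of $\mathcal{A}_i$ is compact by definition) and $\va$ is continuous, the maximum in the definition of $\uppsi(A)$ is attained at some $x_0 \in A$, i.e.\ $\uppsi(A) = \va(x_0)$. I would work from this single maximizer rather than a general covering argument; this is the natural reduction since lower semicontinuity only requires that $\uppsi(B)$ not drop too far below one particular value.

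The next step is to use continuity of $\va$ at $x_0$ with respect to the metric $\mt_{\fs{F}}$: there exists $\eta > 0$ such that $\mt_{\fs{F}}(y,x_0) < \eta$ implies $|\va(y) - \va(x_0)| < \epsilon$, and in particular $\va(y) > \va(x_0) - \epsilon$. I would then set $\delta \coloneq \eta/2$ (any value strictly less than $\eta$ works). Suppose $B \in \mathcal{A}_i$ with $\mt_{\bl{H}}(A,B) < \delta$. Unpacking the Hausdorff distance, $\sup_{a \in A} \mt_{\fs{F}}(a, B) < \delta$, so in particular $\mt_{\fs{F}}(x_0, B) = \inf_{y \in B} \mt_{\fs{F}}(x_0,y) < \delta < \eta$. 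Hence there exists $y_0 \in B$ with $\mt_{\fs{F}}(x_0, y_0) < \eta$.

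Combining these two ingredients yields $\va(y_0) > \va(x_0) - \epsilon$, and since $B$ is compact and nonempty,
\begin{equation*}
\uppsi(B) = \max_{x \in B} \va(x) \geq \va(y_0) > \va(x_0) - \epsilon = \uppsi(A) - \epsilon,
\end{equation*}
which establishes lower semicontinuity at $A$. Since $A$ was arbitrary, $\uppsi$ is lower semicontinuous on all of $\mathcal{A}_i$.

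There is no serious obstacle here; the proof is essentially a direct unpacking of the Hausdorff metric combined with pointwise continuity of $\va$ at the maximizer. The only minor point to be careful about is that the metric on $\fs{F}$ entering $\mt_{\bl{H}}$ is the same metric $\mt_{\fs{F}}$ with respect to which $\va$ is continuous, which is automatic since $\va$ is Keller $C_c^1$ and hence continuous in the Fr\'echet topology induced by $\mt_{\fs{F}}$. (Note that upper semicontinuity, and hence continuity, would generally fail, since the maximizer of $\va$ on $A$ need not be well approximated by elements of nearby sets in the opposite direction.)
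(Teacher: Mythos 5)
Your proof is correct and rests on the same mechanism as the paper's: the one-sided part $\sup_{a\in A}\mt_{\fs{F}}(a,B)$ of the Hausdorff distance guarantees that points of $A$ are approximated by points of nearby sets, and continuity of $\va$ then prevents the maximum from dropping. The paper phrases this sequentially (for $A_k\to A$ it picks $x_k\in A_k$ with $x_k\to x$ for each $x\in A$ and concludes $\uppsi(A)\le\varliminf_k\uppsi(A_k)$), whereas you use the $\epsilon$--$\delta$ formulation and only at the maximizer $x_0$; this is a cosmetic, not substantive, difference.
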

\begin{proof}
	Let $ (A_k) \subset \mathcal{A}_i $.
	Suppose $ A \in  \mathcal{A}_i $ and $ \mt_{\bl{H}} (A_k, A)  \to 0$.
	For each $ x \in A $, there exists a sequence $ (x_k) \subset \fs{F} $ such that $ x_k \in A_k $
	and $ x_k \to \bl{x} $. Thus, 
	$$
	\va(\bl{x}) = \lim_{k \to \infty} \va(x_k) \leq \varliminf_{k \to \infty} \uppsi (A_k),
	$$ 
	and as $ \bl{x} \in A $ is arbitrary we have
	$$
	\uppsi (A) \leq \varliminf_{k \to \infty} \uppsi (A_k).
	$$
\end{proof}
We shall need the following strong version of Ekeland's variational principle.
\begin{theorem}[\cite{hir}, Theorem 4.7]\label{th:evpsf}
	Let $(\ftt{M}, \mathbbm{m})$ be a complete metric space.
	Let a functional $\upphi : \ftt{M} \rightarrow (-\infty, \infty]$ be lower semi-continuous, bounded
	from below and not identical to $\infty$. Let $ \varepsilon > 0 $ be an arbitrary real number, $ m \in \ftt{M} $
	a point such that
	$$
	\upphi(m) \leq \inf_{x \in \ftt{M}} \upphi(x) + \varepsilon.
	$$
	Then for an arbitrary $ r>0 $, there exists a point $ m_r \in \ftt{M} $ such that
	\begin{enumerate}[label=\textbf{(EK\arabic*)},ref=EK\arabic*]
		\item  \label{eq:eks1} $ \upphi(m_r) \leq \upphi (m)$;
		\item  \label{eq:eks2} $\mathbbm{m} (m_r,m) < r$;
		\item  \label{eq:eks3} $\upphi(m_r) < \upphi (x) + \dfrac{\varepsilon}{r} \mathbbm{m}(m_r,x) \, \forall x \in \ftt{M}\setminus \set{m_r}.$
	\end{enumerate}	
\end{theorem}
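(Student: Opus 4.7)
The plan is to prove this by the classical Br\'ezis-Browder ordering technique applied to the functional $\upphi$. Set $\lambda := \varepsilon/r$ and introduce on $\ftt{M}$ the partial order $x \preceq y \iff \upphi(y) + \lambda \mathbbm{m}(x,y) \leq \upphi(x)$. The axioms are routine: reflexivity is immediate, antisymmetry follows because $x \preceq y$ and $y \preceq x$ combine to give $\lambda \mathbbm{m}(x,y) \leq 0$, and transitivity reduces to the triangle inequality for $\mathbbm{m}$.

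Starting from $m_0 := m$, I would iteratively construct a $\preceq$-increasing sequence $(m_n)$ as follows: given $m_n$, let $S_n := \{x \in \ftt{M} : m_n \preceq x\}$, which is nonempty since $m_n \in S_n$. The infimum $\alpha_n := \inf_{S_n} \upphi$ is finite because $\upphi$ is bounded below; pick $m_{n+1} \in S_n$ with $\upphi(m_{n+1}) \leq \alpha_n + 2^{-n}$. The definition of $\preceq$ provides $\lambda \mathbbm{m}(m_n, m_{n+1}) \leq \upphi(m_n) - \upphi(m_{n+1})$; the sequence $(\upphi(m_n))$ is decreasing and bounded below, so it converges, and a telescoping sum yields $\sum_n \mathbbm{m}(m_n, m_{n+1}) < \infty$. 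Hence $(m_n)$ is Cauchy and, by completeness of $\ftt{M}$, converges to some $m_r \in \ftt{M}$. Lower semicontinuity of $\upphi$ applied to the telescoped inequality $\upphi(m_k) + \lambda \mathbbm{m}(m_n, m_k) \leq \upphi(m_n)$ (valid for all $k \geq n$) passes to the limit and yields $m_n \preceq m_r$ for every $n$.

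Next I would verify that $m_r$ is $\preceq$-maximal. If $x \succeq m_r$ then $x \succeq m_n$ for all $n$, so $x \in S_n$ and $\upphi(x) \geq \alpha_n \geq \upphi(m_{n+1}) - 2^{-n}$; letting $n \to \infty$ yields $\upphi(x) \geq \upphi(m_r)$, which combined with the reverse inequality coming from $m_r \preceq x$ forces $\lambda \mathbbm{m}(m_r, x) = 0$ and hence $x = m_r$. Translating this maximality back through the definition of $\preceq$ gives precisely (EK3), while $m \preceq m_r$ yields (EK1) together with $\lambda \mathbbm{m}(m, m_r) \leq \upphi(m) - \upphi(m_r) \leq \varepsilon$, i.e., the non-strict bound $\mathbbm{m}(m, m_r) \leq r$.

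The main obstacle is upgrading (EK2) to the stated strict inequality $\mathbbm{m}(m, m_r) < r$. Tracing the above estimate, equality $\mathbbm{m}(m, m_r) = r$ can occur only if $\upphi(m_r) = \inf_{\ftt{M}} \upphi$ and $\upphi(m) = \inf_{\ftt{M}} \upphi + \varepsilon$ hold simultaneously. Whenever $\upphi(m) < \inf_{\ftt{M}} \upphi + \varepsilon$ strictly, I would pick any $\varepsilon' \in (\upphi(m) - \inf_{\ftt{M}} \upphi,\, \varepsilon)$ and rerun the construction with parameters $(\varepsilon', \lambda)$ in place of $(\varepsilon, \lambda)$: this preserves the coefficient $\lambda = \varepsilon/r$ in (EK3) and strengthens (EK2) to $\mathbbm{m}(m, m_r) \leq \varepsilon'/\lambda < r$. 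Any residual boundary case would be resolved by a limiting argument over constructions with shrinking $\varepsilon'$, and this case analysis is where I expect the delicate part of the proof to lie.
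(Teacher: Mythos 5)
The paper does not prove this statement at all --- it is imported verbatim from \cite{hir}, Theorem 4.7 --- so there is no in-paper argument to compare yours against; I can only assess your proposal on its own terms. Your Br\'ezis--Browder ordering construction is the standard proof of Ekeland's principle and is correct as far as it goes: $\preceq$ is a genuine partial order on the set where $\upphi$ is finite (antisymmetry can fail where $\upphi=\infty$, but your iteration never leaves $\{\upphi\leq\upphi(m)\}$, so this is harmless), the telescoping estimate makes $(m_n)$ Cauchy, lower semicontinuity passes the order relation to the limit $m_r$, maximality of $m_r$ is exactly (\ref{eq:eks3}), and $m\preceq m_r$ yields (\ref{eq:eks1}) together with $\mathbbm{m}(m,m_r)\leq r$.

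The gap you flag at the end, however, is not closable by any limiting argument: in the boundary case $\upphi(m)=\inf\upphi+\varepsilon$, the statement with a strict inequality in (\ref{eq:eks2}) is simply false. Take $\ftt{M}=\{a,b\}$ with $\mathbbm{m}(a,b)=r$, $\upphi(a)=\varepsilon$, $\upphi(b)=0$, and $m=a$: the choice $m_r=b$ violates (\ref{eq:eks2}) (it would require $r<r$), while $m_r=a$ violates (\ref{eq:eks3}) at $x=b$, since $\upphi(a)=\varepsilon=\upphi(b)+\tfrac{\varepsilon}{r}\,\mathbbm{m}(a,b)$ holds with equality. So the correct conclusion --- and the one every standard formulation of the strong Ekeland principle asserts --- is the non-strict bound $\mathbbm{m}(m_r,m)\leq r$; the strict ``$<$'' in (\ref{eq:eks2}) as printed is a transcription slip. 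Your argument proves exactly this correct version (your $\varepsilon'$-rerun even sharpens it to a strict inequality whenever $\upphi(m)<\inf\upphi+\varepsilon$), and the non-strict form is all the paper ever uses: property (P2) in the proof of Theorem \ref{th:co} is stated with ``$\leq$''. Drop your final paragraph and state (\ref{eq:eks2}) with ``$\leq$''; then the proof is complete.
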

Also, we will apply the following properties of Lusternik-Schnirelmann category.
\begin{lemma}\cite[Proposition 2.2]{su}\label{lem:cat}
	Let $\mathsf{T}$ be a topological space, $A,B \subset \mathsf{T}$. Then
	\begin{enumerate}
		\item  If $A \subset B$, then $\ca_{\mathsf{T}}A \leq \ca_{\mathsf{T}} B$.
		\item   $\ca_{\mathsf{T}} (A \medcup B) \leq \ca_{\mathsf{T}} A + \ca_{\mathsf{T}} B$.
		\item \label{3}If $A$ is closed and $\hh: [0,t_0] \times \mathsf{T}   \to \mathsf{T}$ is a deformation, then $\ca_{\mathsf{T}}A \leq \ca_{\mathsf{T}} (\hh (t_0,A))$.
		\item If $\mathsf{T}$ is a Finsler manifold, then there exists  a neighborhood $U$ of $A$ such that $\ca_{\mathsf{T}} \overline{U} = \ca_TA$.
		\item If $\mathsf{T}$ is a connected Finsler manifold and $A$ is closed then $\ca_{\mathsf{T}} A \leq \dim A+1$, where $\dim$ is the covering dimension.
	\end{enumerate}	
\end{lemma}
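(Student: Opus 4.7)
My plan is to treat the five properties in order: (1) and (2) are immediate from the definition of $\ca_{\mathsf{T}}$, (3) is the central homotopical ingredient used in minimax critical-point arguments, and (4), (5) are standard facts about closed subsets of ANRs, which apply here since Finsler manifolds are ANRs.

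For (1), if $\{C_1,\dots,C_k\}$ is a minimal cover realizing $\ca_{\mathsf{T}}B$, the same family covers $A\subset B$ with each member closed and contractible in $\mathsf{T}$, giving $\ca_{\mathsf{T}}A\leq k$. For (2), concatenating minimal covers of $A$ and $B$ produces a cover of $A\cup B$ of size $\ca_{\mathsf{T}}A+\ca_{\mathsf{T}}B$ whose members are again closed and contractible in $\mathsf{T}$.

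The heart of the lemma is property (3). Set $k=\ca_{\mathsf{T}}(\hh(t_0,A))$ and pick a closed cover $\{C_1,\dots,C_k\}$ of $\hh(t_0,A)$, together with contractions $h_i\colon [0,1]\times C_i\to\mathsf{T}$ of each $C_i$ to a point $p_i$ in $\mathsf{T}$. Define
\[
D_i=\hh(t_0,\cdot)^{-1}(C_i)\cap A.
\]
Since $A$ is closed and $\hh(t_0,\cdot)$ is continuous on $\mathsf{T}$, each $D_i$ is closed in $\mathsf{T}$, and $\bigcup_i D_i=A$. A contraction of $D_i$ in $\mathsf{T}$ to $p_i$ is obtained by first running the deformation $s\mapsto \hh(st_0,x)$ for $x\in D_i$ until the image $\hh(t_0,x)$ lands in $C_i$, and then applying $h_i$; continuity of the resulting homotopy follows from the pasting lemma. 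Hence $\ca_{\mathsf{T}}A\leq k$.

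For (4), I would appeal to the fact that a Finsler manifold $\mathsf{T}$ is metrizable and paracompact, and therefore an ANR (the same input already invoked in the proof of Lemma \ref{lm:come} via \cite[Corollary 1.2.13]{van}), so Theorem \ref{th:anr} applies verbatim. For (5), the classical estimate $\ca_{\mathsf{T}}A\leq \dim A+1$ for closed $A$ in an ANR of covering dimension $n$ is obtained by refining a locally finite open cover of $A$ into $\dim A+1$ closed sets, each of which is contractible inside $\mathsf{T}$ by local contractibility of the ANR. The main obstacle is really property (3), where one must verify \emph{both} that the preimages $D_i$ are closed in $\mathsf{T}$ (not merely in $A$) and that the concatenation of the deformation with the contractions $h_i$ yields an honest homotopy from the inclusion $D_i\hookrightarrow\mathsf{T}$ at time $0$ to a constant map at time $1$; the closedness of $A$ and the pasting lemma dissolve both concerns.
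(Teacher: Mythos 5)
Your argument is correct and is exactly the classical one; the paper itself does not prove this lemma but simply cites Szulkin's Proposition 2.2 (and, for item (4), the Palais ANR result already quoted as Theorem \ref{th:anr}), so your write-up of (1)--(4) — in particular the pullback cover $D_i=\hh(t_0,\cdot)^{-1}(C_i)\cap A$ with the concatenated homotopy for (3) — matches the standard proof and has no gaps. The only place worth a second look is (5): after refining to a cover of order $\leq \dim A+1$ and grouping into colour classes, each class is a \emph{disjoint} union of contractible pieces, and it is precisely the assumed connectedness of $\mathsf{T}$ (not invoked in your sketch) that lets you drag those pieces to a common point and conclude that each class is contractible in $\mathsf{T}$.
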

\begin{theorem}\label{th:co}
Let $ \bl{G} $ be a discrete subgroup of a \fr space $ \fs{F} $. Assume that
the dimension of the space generated by $ \bl{G} $ is a finite number n.	
Let $ \va : \fs{F} \to \rr $ be a $ \bl{G} $-invariant functional of class Keller $C_c^1 $.
If $ \va $ is bounded below  and satisfies the $ \ps $-condition, then $ \va $ has $ n+1 $ critical orbits.
\end{theorem}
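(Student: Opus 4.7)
The strategy is a Ljusternik--Schnirelmann minimax scheme at the quotient level: one critical orbit is produced at each of the minimax values $\mu_1 \leq \cdots \leq \mu_{n+1}$, with Theorem~\ref{th:evpsf} playing the role of the (unavailable) deformation lemma.

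\emph{Nonempty classes and finite levels.} Under the identification $q(\fs{F}) \cong \bl{T}^n \times \fs{F}_2$, the factor $\fs{F}_2$ is contractible, so $q(\fs{F})$ deformation retracts onto the torus $\bl{T}^n$, whose LS category equals $n+1$. Taking $D \subset \fs{F}_1$ to be any compact fundamental domain for $\bl{G}$ and setting $K \coloneq D \times \{0\} \subset \fs{F}$ produces a compact subset with $\ca_{q(\fs{F})} q(K) = n+1$, so $K \in \mathcal{A}_{n+1} \subset \cdots \subset \mathcal{A}_1$. Since $\va$ is bounded below and each element of $\mathcal{A}_i$ is compact, every $\mu_i$ is finite with $\inf\va \leq \mu_i \leq \sup_K \va$.

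\emph{Each $\mu_i$ is a critical value.} Fix $i \in \{1,\ldots,n+1\}$. By Lemmas~\ref{lm:come} and~\ref{lem:semi}, $\uppsi$ is lower semicontinuous and bounded below on the complete metric space $(\mathcal{A}_i, \mt_{\bl{H}})$. For each $k$ I would choose $\widetilde A_k \in \mathcal{A}_i$ with $\uppsi(\widetilde A_k) \leq \mu_i + k^{-2}$ and apply Theorem~\ref{th:evpsf} with $\varepsilon = k^{-2}$, $r = k^{-1}$, obtaining $A_k \in \mathcal{A}_i$ with $\uppsi(A_k) \to \mu_i$ and
$$
\uppsi(A_k) \leq \uppsi(B) + \tfrac{1}{k}\, \mt_{\bl{H}}(A_k, B) \qquad \forall\, B \in \mathcal{A}_i.
$$
For every $h \in \fs{F}$ and small $t>0$, the translate $B_t \coloneq A_k + th$ is compact; since translation by $th$ is isotopic to the identity on $q(\fs{F})$, Lemma~\ref{lem:cat}(3) yields $B_t \in \mathcal{A}_i$, and $\mt_{\bl{H}}(A_k, B_t) \to 0$ as $t \to 0^+$. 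Substituting $B_t$ into the Ekeland inequality, expanding $\va(x+th) = \va(x) + t\,\dd\va(x)h + o(t)$, and passing to a maximizing limit in the compact $A_k$ gives a one-sided bound $-\dd\va(\bar x_{k,h})h \leq C(h)/k$ at a near-maximizer $\bar x_{k,h} \in A_k$. A diagonal argument over a countable dense family of directions $h$ then produces a sequence $x_k \in A_k$ with $\va(x_k) \to \mu_i$ and $\dd\va(x_k) \to 0$ in the $\bb_{\fs{F}}^c$-topology of $\lc(\fs{F}, \rr)$. Invoking $\ps$, a subsequence of $q(x_k)$ converges in $q(\fs{F})$; lifting through the covering map $q$ and using continuity of $\va$ and $\dd\va$ gives a critical point $c_i \in \fs{F}$ at level $\mu_i$.

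\emph{Counting orbits and the main obstacle.} If the values $\mu_1,\ldots,\mu_{n+1}$ are pairwise distinct, the $n+1$ critical orbits are manifestly different. The delicate case, which I expect to be the main obstacle, is when $\mu_i = \mu_{i+1} = \cdots = \mu_{i+j}$ all coincide at some common value $c$: here one must extract $j+1$ genuinely different orbits at this level. In the Banach setting one obtains $\ca_{q(\fs{F})} q(\mathrm{Crit}_c) \geq j+1$---which, combined with Lemma~\ref{lem:cat}(2) and the fact that each orbit projects to a single point of $q(\fs{F})$ and so has LS category $1$, forces at least $j+1$ orbits---by deforming $\{\tilde\va \leq c+\varepsilon\}$ into $\{\tilde\va \leq c-\varepsilon\}$ off a neighborhood of $\mathrm{Crit}_c$, a flow unavailable in the Fr\'echet category. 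My substitute would be to iterate the Ekeland procedure of the previous step in parallel on the nested chain $\mathcal{A}_i \supset \cdots \supset \mathcal{A}_{i+j}$ and to use Theorem~\ref{th:anr} applied to the ANR quotient $q(\fs{F})$ to separate the $q$-limits of the resulting near-maximizers into $j+1$ distinct orbits, patching things together with the finite-dimensional translations in $\fs{F}_1$ that already served as surrogate descent directions in the previous step.
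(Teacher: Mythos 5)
Your overall framework---the minimax values $\mu_i$ over the classes $\mathcal{A}_i$, with Ekeland's principle on the complete metric space $(\mathcal{A}_i,\mt_{\bl{H}})$ standing in for the missing deformation lemma---is the same as the paper's, and your preliminary observation that $q(\fs{F})\cong\bl{T}^n\times\fs{F}_2$ retracts onto $\bl{T}^n$, so that $\mathcal{A}_{n+1}\neq\emptyset$ and all $\mu_i$ are finite, is correct and is in fact a point the paper leaves implicit. But there are two genuine gaps. First, your passage from the Ekeland inequality to a Palais--Smale sequence does not work as written: testing the inequality against the single translate $A_k+th$ only controls the directional derivative $\dd\va(\bar x_{k,h})h$ at a near-maximizer $\bar x_{k,h}$ that \emph{depends on} $h$. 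A diagonal argument over countably many directions gives you, for each $h$, a possibly different sequence of points; it does not produce one sequence $(x_k)$ along which $\va'(x_k)\to 0$ in the $\bb_F^c$-topology, which is what the $\ps$-condition requires. The repair---and what the paper actually does---is to extract finitely many descent directions $h_{B,y_{\bl{i}}}$ on the compact near-maximal slice $C$ (using a uniform lower bound $\snorm[B]{\va'}>\varepsilon$ off a neighborhood of the critical set, itself a consequence of $\ps$), glue them with a partition of unity into a single continuous perturbation $\eu{I}(t,\cdot)$, verify $\eu{I}(1,\eu{A})\in\mathcal{A}_i$ via Lemma \ref{lem:cat}, and test the Ekeland inequality against $\eu{I}(1,\eu{A})$. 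Some such gluing step is unavoidable.

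Second, and more seriously, the multiplicity count when $\mu_i=\cdots=\mu_k=\mu$---which you correctly flag as the main obstacle---is left as a sketch that does not amount to an argument. The paper's mechanism is an excision-plus-subadditivity contradiction: assume there are only $m\le k-i$ orbits at level $\mu$, choose $\bl{r}$ so that $q$ is injective on the balls $\fs{B}(x_j,2\bl{r})$, and remove the saturated neighborhood $\bl{B}_{2\bl{r}}$ from a near-optimal $A\in\mathcal{A}_k$. Since $q(\bl{B}_{2\bl{r}})$ is a union of $m$ contractible sets, Lemma \ref{lem:cat} gives $\ca_{q(\fs{F})}q(A\setminus\bl{B}_{2\bl{r}})\ge k-m\ge i$, so the excised set still lies in $\mathcal{A}_i$; running Ekeland and the descent map $\eu{I}$ on this smaller class then yields a definite decrease $-\varepsilon r_{min}$ in $\uppsi$ that contradicts the Ekeland inequality. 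Your proposed substitute---iterating Ekeland in parallel on the nested chain and invoking Theorem \ref{th:anr} to "separate the $q$-limits"---does not supply this counting step: Theorem \ref{th:anr} only stabilizes the category on a neighborhood of a given set and says nothing about how many orbits are needed to carry category $k-i+1$. Without the excision argument, the conclusion that $\va$ has $n+1$ distinct critical orbits is not established.
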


\begin{proof}
	Consider the increasing sequence of the Lusternik-Schnirelmann minimax values
$\mu_i = \inf_{A \in \mathcal{A}_i} \sup_{x \in A}\varphi(x)$, $1 \leq i \leq n+1.$
Define the sets
$$
\mathsf{S}_{\mu_i} \coloneq \Set{x \in \fs{F}}{\va'(x)=0 \, \&  \, \va(x) = \mu_i}.
$$
We claim that if $ \mu_i = \mu_k =\mu$  for some $ k,\, i \leq k \leq n+1 $, then $ \mathsf{S}_{\mu_i} $
contains $ k-i+1 $ critical orbits. This concludes the proof of the theorem.
We prove the claim by contradiction. Suppose that $\mathsf{S}_\mu $ contains $ m $ distinct critical orbits
$ q(x_1), \cdots, q(x_m) $ and $ m \leq k-i $. Pick the positive number $ \bl{r}  $ so that on 
$ \fs{B}(x_j,2\bl{r}), 1 \leq j \leq m, $ the canonical surjection $ q $ is injective.
Define the set
$$
\bl{B_r} \coloneq \bigcup_{j=1}^m \bigcup_{g \in \bl{G}} \fs{B}(x_j+g,r). 
$$
We show that there exists $ \varepsilon, 0 < \varepsilon^2 < \bl{r}^2 $ such that
\begin{equation}\label{eq:t1}
\snorm[B]{\va'(\bl{x})} > \varepsilon \quad  \forall B \in \bb_F^c
\end{equation}
if $ \bl{x} \in \va^{-1} ([\mu-\varepsilon^2, \mu + \varepsilon^2]) \setminus \bl{B_r} $. Because, if
\eqref{eq:t1} is not valid, then there exists a sequence $ (x_{\bl{j}}) \subset \fs{F} \setminus \bl{B_r} $
such that 
$$
\abs{\va(x_{\bl{j}})} \leq \mu + \dfrac{1}{\bl{j}} \, \text{and}\, \snorm[B,n]{\va'(\bl{x})} \leq \dfrac{1}{\bl{j}}
\quad \forall n \in \nn \, \text{and} \, \forall B \in \bb_F^c.
$$
Since $ \va $ satisfies the $ \ps $-condition we may assume that $ q(x_{\bl{j}}) \to q(\bar{x}) $ 
for some $ \bar{x} \in \fs{F} $. As $ \va $ and $ \va' $ are $ \bl{G} $-invariant, we may suppose that 
$ x_{\bl{j}} \in [0,1]^n \times \fs{F}_2 $. Whence, $ x_{\bl{j}} \to \bar{x} $ yields $ \bar{x} \in \fs{F} \setminus \bl{B_r} $ and $ \va(\bar{x}) = \mu $ and $ \va'(\bar{x}) =0 $ which is impossible because $ \bl{B_r} $
is a neighborhood of $ \mathsf{S}_{\mu} $. There exists $ A \in \mathcal{A}_k $ such that
$$
\uppsi(A) = \max_A \va \leq \mu + \varepsilon^2.
$$
This is achievable by the definition of $ \mu_k $. Let $ \bl{A} = A\setminus \bl{B}_{2\bl{r}} $. In virtue of Lemma \ref{lem:cat} we obtain
\begin{align*}
	k &\leq \ca_{q(\fs{F})} q(A) \\
	  &\leq \ca_{q(\fs{F})} \big( q(\bl{A}) \medcup q(\bl{B}_{2\bl{r}}) \big) \\
	  &\leq \ca_{q(\fs{F})} q(\bl{A}) +  \ca_{q(\fs{F})}q(\bl{B}_{2\bl{r}})\\
	  &\leq \ca_{q(\fs{F})} q(\bl{A}) +  m \quad \text{by the definition of the category}\\
	  &\leq \ca_{q(\fs{F})} q(\bl{A}) + k- i.
\end{align*}
Thus, $ \bl{A}  \in \mathcal{A}_i$.

By Lemma \ref{lm:come}, the space $ (\mathcal{A}_i, \mt_{\bl{H}}) $ is complete. Also, by Lemma
\ref{lem:semi} the function $ \uppsi: \mathcal{A}_i \to \rr $ is lower semicontinuous. So, we can employ
the Ekeland variational theorem \ref{th:evpsf}. By the latter theorem there exists $ \eu{A} \in \mathcal{A}_i $ such that
\begin{enumerate}[label=\textbf{(P\arabic*)},ref=P\arabic*]
	\item $ \uppsi (\eu{A}) \leq \uppsi (\bl{A}) \leq \uppsi(A) \leq \mu + \varepsilon^2$,
	\item $\mt_{\bl{H}}(\eu{A}, \bl{A}) \leq \varepsilon$,
	\item  \label{eq:t2} $\uppsi (S) > \uppsi (\eu{A}) - \epsilon \mt_{\bl{H}} (\eu{A}, S)$ \quad $ S \in \mathcal{A}_i, S \neq \eu{A}. $
\end{enumerate}
As $ \bl{A} \cap \bl{B}_{2 \bl{r}} = \emptyset $ and $ \mt_{\bl{H}}(\eu{A}, \bl{A}) \leq \varepsilon  \leq \bl{r}$,
then $ \eu{A} \cap \bl{B}_{2\bl{r}} = \emptyset $.
Also, the set 
$$
C \coloneq \Set{s \in \eu{A}}{\mu - \varepsilon^2 \leq \va(s)}
$$
is a subset of  $\va^{-1} ([\mu-\varepsilon^2, \mu + \varepsilon^2]) \setminus \bl{B_r}$. The set
$ C $ is closed and as $ \va $ is continuous, then it is compact. By \eqref{eq:t1} for each
$y \in C$ there exists $ h_{B,y} \in B $ such that 
\begin{equation}\label{eq:t3}
\langle \va'(y), h_{B,y}\rangle < - \varepsilon.
\end{equation}
Since $ \va' $ is continuous, it follows from \eqref{eq:t3} that there exists $ r_y>0 $ such that for all 
$ g \in \bl{G} $ and all $ h \in \fs{F} $ with $ \snorm[\fs{F},n]{h} < r_y $ we obtain
$$
\langle \va'(y+g+h), h_{B,y}\rangle < - \varepsilon.
$$
Since $ C $ is compact, we can find a subcovering $ C_1, \cdots, C_{\bl{n}} $ defined by
$$
C_{\bl{i}} = \fs{B}(y_{\bl{i}}, r_{y_{\bl{i}}}) \quad 1 \leq i \leq \bl{n}.
$$
Define the functions $ \upphi_{\bl{i}} : \fs{F} \to [0,1] $ by
\begin{gather*}
\upphi_\bl{i}(x) =\begin{cases} 
\dfrac {\sum_{g \in \bl{G}} \mt_{\fs{F}} (x+g, \complement C_{\bl{i}} )}
{\sum_{k=1}^{\bl{n}}\sum_{g \in \bl{G}} \mt_{\fs{F}} (x+g, \complement C_{k} )} &   x \in \bigcup_{i=1}^\bl{n} C_j, \\
0 & \text{otherwise}. 
\end{cases}
\end{gather*}
Fix a $ \bl{G} $-invariant continuous function  $\upphi : \fs{F} \to [0,1]$ such that
\begin{gather*}
\upphi(x) =\begin{cases} 
1 &   \mu \leq \va (x), \\
0 & \va (x) \leq \mu - \varepsilon^2 . 
\end{cases}
\end{gather*}
Let $ r_{min} = \min_{1 \leq \bl{i} \leq \bl{n}}{r_{y_{\bl{i}}}}$. Define the curve 
$ \eu{I} \in \eu{C }([0,1] \times \fs{F}, \fs{F})  $ by
$$
\eu{I}(t,x) = x + t r_{min}\upphi(x) \sum_{\bl{i}=1}^{\bl{n}} \uppsi_{\bl{i}}(x) (h_{B,y_{\bl{i}}}).
$$
For all $ x \in \fs{F} $, all $ g \in \bl{G} $ and all $ t \in [0,1] $ we have
$$
\eu{I}(t,x+g) = \eu{I}(t,x) + g.
$$
Lemma \ref{lem:cat} implies that
$$
\ca_{q(\fs{F})} \big( q(\eu{I}(1,\eu{A}))\big) \geq   \ca_{q(\fs{F})} \big( q(\eu{A})\big) \geq i,
$$
whence, as $ \eu{I}(1,\eu{A}) $ is compact, $ \eu{I}(1,\eu{A}) \in \eu{A}_i $.
By the mean value theorem (see \cite{ke}) and \eqref{eq:t2} for each $ y \in C $,
there is $ \bl{t} \in (0,1) $ such that
\begin{align}
\va (\eu{I}(1,\eu{A}))- \va (x) &=
\langle \va'( \eu{I}(\bl{t},\eu{A})) , r_{min}\upphi(x) \sum_{\bl{i}=1}^{\bl{n}} \uppsi_{\bl{i}}(x) (h_{B,y_{\bl{i}}})    \rangle \nonumber \\
&= r_{min}\upphi(x) \sum_{\bl{i}=1}^{\bl{n}} \uppsi_{\bl{i}}(x) \langle \va'(x + \bl{t}r_{min}\upphi(x) \sum_{\bl{i}=1}^{\bl{n}} \uppsi_{\bl{i}}(x) (h_{B,y_{\bl{i}}}) ) ,
 h_{B,y_{\bl{i}}}  \rangle \nonumber \\
&\leq -\varepsilon r_{min} \upphi (x) \label{eq:t5}.
\end{align} 
If $ x \in C $, then $ \upphi(x) = 0$ and $ \va (\eu{I}(1,X)) = \va(x)$.
Let $ \bl{y} \in \eu{A} $ so that $ \va (\eu{I}(1,\bl{Y})) = \uppsi(C) $.
Then, 
$$
\mu \leq \va (\eu{I}(1,\bl{y})) \leq \va (\bl{y}).
$$
Thus, $ \bl{y} \in C $ and $ \upphi(\bl{y}) =1 $. It follows from
\eqref{eq:t5} that
$$
\va(\eu{I}(1,\bl{y})) - \va(\bl{y}) \leq -\varepsilon r_{min}.
$$
Therefore,
$$
\uppsi (S) + \varepsilon r_{min} \leq \va (\bl{y}) \leq \uppsi (\eu{A}).
$$
However, $ \mt_{\bl{H}} (\eu{A},S) \leq r_{min} $ by the definition of $ S $.
Hence,
$$
\uppsi (S) + \varepsilon \mt_{\bl{H}}(\eu{A},S) \leq \uppsi (\eu{A})
$$
which contradicts \eqref{eq:t2} and concludes the proof. 
 \end{proof}
\section{Multiplicity theorems for \fr manifolds}
Henceforth we assume that $\fs{M}$ is a connected $C^1$-Fr\'{e}chet manifold modeled on $ \fs{F} $ endowed with a complete Finsler  metric $ \uprho $ \eqref{finmetric}, and  that $\varphi : \fs{M} \to \rr$ is a non-constant Keller $C_c^1$-functional. 
Let $x\in \fs{M}$, we shall say that $x$ is a critical point of $\varphi$ if $(\varphi \psi^{-1})'(\psi(x)) =0$ for
a chart $(x\in U,\psi)$ and hence for every chart whose domain contains $x$.

Let $( \snorm[\fs{M},n]{\cdot} )_{n \in \nn}$ be a graded Finsler structure on $T\fs{M}$. 
Define a closed unit semi-ball centered at the zero vector $\zero{x}$ of $T_x\fs{M}$ by $$\fs{B}^n(\zero{x},1) = \{ h \in T_x\fs{M} : \, \snorm[\fs{M},n]{h}^x \leq 1\}$$ for each $x \in  \fs{M}$ and each 
$\snorm[\fs{M},n]{h}^x $.
Let $$\fs{B}_{\infty}(\zero{x}) = \bigcap_{n=1}^{\infty} \fs{B}^n(\zero{x},1).$$
The set $\fs{B}_{\infty}(0_x)$ is not empty and  infinite because it can be identified with a convex neighborhood of the zero of the
Fr\'{e}chet space $U \times  \fs{F}$, where $U$ is an open neighborhood of $x$.

Let $\varphi: \fs{M} \to \rr$ be a $C^1$-functional and $x \in \fs{M}$. Define
\begin{equation}
\Phi_{\varphi}(x) = \inf \big \{ \varphi'(x,h) : h \in \fs{B}_{\infty}(\zero{x}) \big\}.
\end{equation}
\begin{defn}[The $ \pss $-condition for \fr manifolds, \cite{k}]\label{def:ps}
	We say that a $C^1$- functional $\varphi: \fs{M} \to \rr$ satisfies the Palais-Smale condition at a level $c \in 
	\rr$, $\pss$ in short, in a set $A \subset \fs{M}$ if any sequence $( m_i)_{i \in \nn} \subset A$ such that $$\varphi(m_i) \to c  \quad \mathrm{and} \quad
	\Phi_{\varphi}(m_i)  \to 0, $$ has a convergent subsequent.
\end{defn}
 We denote by
$\mathrm{Cr(\varphi)}$ the set of critical points of $\varphi$, and for $c \in \rr$
$$\mathrm{Cr}(\varphi,c) = \{ x\in \mathrm{Cr(\varphi)},\varphi(x)=c\},$$
$$\varphi^c = \{ x \in \fs{M} : \varphi(x) \leq c \}.$$
A mapping $\hh \in \eu{C}([0,1] \times \fs{M} \to \fs{M})$ is called a deformation if $\hh (0,x) =x$ for all $x \in M$.
Let $C$ be a subset of $\fs{M}$, we say that  $\hh$ is a $C$-invariant for an interval $I \subset [0,1]$ 
if $\hh(t,x) = x$ for all $x\in C$ and all $t \in I$.

A family $\mathcal{F}$ of subset of $\fs{M}$ is said to be deformation invariant if for each $A \in \mathcal{F}$ and each deformation $\hh$
for $\fs{M}$, $\hh_1(x) \coloneq \hh(1,x)$, it follows that $$\hh_1 (A) \in \mathcal{F}.$$

A compact Lie group $ \bl{L} $ admits a unique $ \bl{L} $-invariant measure $ \mu = \mu_{\bl{L}} $, called the Haar measure, such that $ \mu (\bl{L})=1 $.

A crucial step in extending  the Ljusternik–Schnirelmann theory  is the determination of topological indices, namely:
\begin{defn}
	Let $ \mathsf{D} \coloneq \Set{A \subset \fs{M}}{ A \text{ is closed and}\, \bl{L}\text{-invariant}} $.
	An $ \bl{L} $-index on $ \fs{M} $ is a map
	 $ \Ind : \mathsf{D} \to \nn \medcup \set{\infty}$ such that for all $ A,B \in \mathsf{D} $ we have
	 \begin{enumerate}[label=\textbf{(I.\arabic*)},ref=I.\arabic*]
	 \item \label{def:i1} $\Ind (A) = 0$ if and only if $ A = \emptyset $; 	
	 \item \label{def:i2} $ \Ind(A \cup B) \leq \Ind (A) + \Ind (B) $;
	 \item  \label{def:i3} if $\upphi :  A \to B$ is $ \bl{L} $-equivalent, then $ \Ind{A} \leq \Ind (B) $;
	 \item  \label{def:i4} if $ A $ is compact, then there exists an $ \bl{L} $-invariant, open \nbd $ \uu $ of $ A $
	 such that $ \Ind(\Cl{\uu})  = \Ind (A) $. 
 	\end{enumerate}	
\end{defn}
\begin{lemma}\label{lem:inv}
	Let $ \va : \fs{M} \to \rr $ be an $ \bl{L} $-invariant functional of class Keller $ C_c^1 $.
	Let $ (\uu, \psi) $ be an arbitrary chart and $ \va_{\psi} $ the local representative of
	$ \va $ in the chart. Then
	$$
	\langle \va_{\psi}'  (\psi (l \cdot x)), \psi (y) \rangle 
	= \langle \va'(\psi(x), \psi (l^{-1}\cdot y)) \rangle \quad \forall x,y \in \uu; \forall l \in \bl{L}.
	$$
	
\end{lemma}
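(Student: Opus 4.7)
The plan is to exploit the $\bl{L}$-invariance $\va\circ L_l=\va$, where $L_l:\fs{M}\to\fs{M}$ denotes the diffeomorphism $L_l(x)=l\cdot x$, and to read off the claim by differentiating in the chart. First I would rewrite the invariance in $(\uu,\psi)$ as
$$\va_{\psi}\circ F_l = \va_{\psi}, \qquad F_l := \psi\circ L_l\circ\psi^{-1},$$
on the appropriate subdomain of $\psi(\uu)$. Since $\va$ is Keller $C_c^1$ and $L_l$ is a diffeomorphism, both sides lie in the Keller $C_c^1$ class and may be differentiated legitimately.

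Next, applying the chain rule for Keller $C_c^1$-mappings (see \cite{ke}) to this identity at $\psi(x)$, tested against an arbitrary direction $h\in\fs{F}$, gives
$$\langle \va_{\psi}'(\psi(x)),\, h\rangle = \langle \va_{\psi}'(\psi(l\cdot x)),\, F_l'(\psi(x))\,h\rangle.$$
The key identification is that $F_l'(\psi(x))\,h$ coincides with the linearization of the $\bl{L}$-action through the chart, i.e.\ $F_l'(\psi(x))\,h=\psi(l\cdot\psi^{-1}(h))$; informally one writes this as $l\cdot h$. Substituting $h=\psi(l^{-1}\cdot y)$ then yields exactly
$$\langle \va_{\psi}'(\psi(l\cdot x)),\, \psi(y)\rangle = \langle \va_{\psi}'(\psi(x)),\, \psi(l^{-1}\cdot y)\rangle,$$
which is the asserted identity.

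The main obstacle, and the step that I would treat with most care, is justifying the linearization formula for $F_l'$. In most situations this is automatic: $\bl{L}$ acts by $C_c^1$-diffeomorphisms whose differentials respect the model space structure via $\psi$, so the formula is an instance of how group actions pass to tangent bundles. Otherwise one verifies it directly from the definition of the Keller directional derivative $\dd\va(x)h$ together with the continuity of the action $(l,h)\mapsto l\cdot h$. Once this compatibility is fixed, a single application of the chain rule plus the substitution $h\leftrightarrow\psi(l^{-1}\cdot y)$ closes the argument.
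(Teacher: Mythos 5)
Your overall route---differentiate the invariance identity $\va_{\psi}\circ F_l=\va_{\psi}$ in the chart---is essentially the paper's, which writes out the directional derivative as a limit and cancels the group element using invariance. But your ``key identification'' is false as written, and it is exactly where the real content sits. You claim $F_l'(\psi(x))\,h=\psi(l\cdot\psi^{-1}(h))$; the left side is a continuous \emph{linear} operator applied to $h$ (and a priori depends on the base point $\psi(x)$), while the right side is the value of the generally \emph{nonlinear} map $F_l$ at the point $h$ (and does not involve $\psi(x)$ at all). These coincide precisely when $F_l$ is linear, i.e.\ when the chart $\psi$ linearizes the action of $l$. For a general continuous (or Keller $C_c^1$) action of $\bl{L}$ on a Fr\'echet manifold and an arbitrary chart this is not automatic: ``how group actions pass to tangent bundles'' gives you the tangent map $T_xL_l$, which in the chart is the linear operator $F_l'(\psi(x))$, not the point map $h\mapsto F_l(h)$, and there is no reason for $F_l'(\psi(x))$ to carry $\psi(l^{-1}\cdot y)$ to $\psi(y)$ unless the action is linear (or at least affine with linear part the action) in the chart.

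To be fair, the paper's own one-line computation smuggles in the same hypothesis: its first equality replaces $\psi^{-1}\bigl(\psi(l\cdot x)+t\,\psi(y)\bigr)$ by $l\cdot\psi^{-1}\bigl(\psi(x)+t\,\psi(l^{-1}\cdot y)\bigr)$, which is precisely the assertion that $\psi$ intertwines the affine structure of the model space with the action. So your proposal reproduces the paper's argument, gap included. To close it you must either (a) add the hypothesis that the action is linear in the chosen chart---true for the linear actions on $\fs{F}$ relevant to Section 3, and the only setting in which putting $\psi(l^{-1}\cdot y)$ in the ``direction'' slot is even natural---or (b) restate the conclusion with the tangent map of the action, namely $\langle\va_{\psi}'(\psi(l\cdot x)),F_l'(\psi(x))h\rangle=\langle\va_{\psi}'(\psi(x)),h\rangle$, which is what the chain rule actually delivers and which still suffices for the intended application (invariance of the critical set).
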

\begin{proof}
	\begin{align*}
		\langle \va_{\psi}'  (\psi (l \cdot x)), \psi (y) \rangle &= \lim_{ t \to 0} \dfrac{1}{t} \bigg(
		\va_{\psi} \big( l \cdot \big(\psi^{-1}(\psi(x)+ t \psi(l^{-1} \cdot y)) \big)  \big) - \va_{\psi} (\psi (l \cdot x)) 
		\bigg) \\
		&= \lim_{ t \to 0} \dfrac{1}{t} \bigg(
		\va_{\psi} \big( \psi^{-1}(\psi(x)+ t \psi(l^{-1} \cdot y))   \big) - \va_{\psi} (\psi (x)) 
		\bigg) \\
		&= \langle \va'(\psi(x), \psi (l^{-1}\cdot y)) \rangle.
	\end{align*}
\end{proof}

Now for $ \bl{L} $-invariant functionals, we will extend the deformation results \cite[Lemma 3.1, Corollary 3.5]{k}to make the deformation $ \bl{L} $-equivalent. 

\begin{theorem}\label{th:eqdef}
 Assume $\varphi: \fs{M} \to \rr$ is $ \bl{L} $-invariant. Let $B$ and $A$ be closed disjoint, $ \bl{L} $-invariant subsets of
	$\fs{M}$ and let $A$ be compact. Suppose $k>1$ and $\epsilon>0$ are
	such that $\Phi_\varphi(x) < -2\epsilon(1+k^2)$ for all $x \in A$. Then there exist $t_0>0$ and
	$\bl{L}$-equivalent deformation $\hh$ for  $ [0,t_0)$ such that
	\begin{enumerate} 
		\item $ \hh (t,x) = x, \quad \forall t \in B, \, \forall t \in [0,t_0) $,
		\item  $\rho (\mathcal{H} (t,x),x) \leq kt, \quad \forall x \in M$,
		\item  $
		\varphi(\hh(t,x)) -\varphi (x) \leq - 2\epsilon(1+k^2) t, \quad \forall x \in M.
		$\label{defo}
	\end{enumerate}
\end{theorem}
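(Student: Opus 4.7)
The strategy is to start from the non-equivariant deformation of \cite[Lemma~3.1, Corollary~3.5]{k} and symmetrize every ingredient of that construction using the Haar measure $\mu$ of the compact Lie group $\bl{L}$. Two facts make this feasible: averaging over a compact group yields $\bl{L}$-invariant partitions of unity and $\bl{L}$-invariant auxiliary functions on $\fs{M}$, and Lemma~\ref{lem:inv} guarantees that a descent direction at a point $x$ transforms into a descent direction of exactly the same slope at each $l\cdot x$. Without loss of generality (replacing the Finsler structure by its $\bl{L}$-average, which still satisfies F1, F2) we may assume that $\rho$ is $\bl{L}$-invariant, so that $\rho(\cdot,B)$ is an $\bl{L}$-invariant cut-off function.

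First I would replay the selection of descent directions in an equivariant way. For every $x\in A$ fix $h_x\in\fs{B}_\infty(\zero{x})$ with $\va'(x)h_x<-2\epsilon(1+k^2)$; by continuity of $\va'$ and property F2 we may choose a chart $(\uu_x,\psi_x)$ and a subneighborhood $\vv_x\subset\uu_x$ on which one constant chart-direction still gives descent at slope $<-2\epsilon(1+k^2)$, with Finsler norm bounded by $k$. Compactness of $A$ yields a finite subcover $\vv_{x_1},\dots,\vv_{x_m}$ of $A$; the orbit $\bl{L}\cdot A$ is compact as the image of $\bl{L}\times A$, and it is covered by the saturation $\{l\cdot \vv_{x_j}\}$. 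Take any partition of unity $\{\lambda_j\}$ subordinate to this cover together with a separating bump between $\bl{L}\cdot A$ and $B$, and symmetrize by setting
$$\tilde\lambda_j(x)\coloneq\int_{\bl{L}}\lambda_j(l\cdot x)\,d\mu(l),$$
obtaining an $\bl{L}$-invariant partition of unity. Lemma~\ref{lem:inv} ensures that the ``transported'' descent vectors $l\cdot h_{x_j}$ attached to points of $l\cdot\vv_{x_j}$ satisfy $\va'(l\cdot y)(l\cdot h_{x_j})=\va'(y)(h_{x_j})<-2\epsilon(1+k^2)$, so the descent inequality survives the averaging.

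Next I would mimic the patched flow construction of \cite{k}, which on a \fr manifold does not solve an ODE but produces $\hh(t,x)$ by following a chart-supported direction for small time and gluing. Because the partition of unity is $\bl{L}$-invariant and the field of descent vectors is $\bl{L}$-equivariant, each local step commutes with the $\bl{L}$-action; hence by induction on the patching stages one obtains $\hh(t,l\cdot x)=l\cdot\hh(t,x)$ for all $l\in\bl{L}$, $t\in[0,t_0)$. Property (1) follows from the $\bl{L}$-invariant cut-off against $\rho(\cdot,B)$; property (2) is inherited from $h_x\in\fs{B}_\infty(\zero{x})$ combined with F2, which after integrating in $t$ yields $\rho(\hh(t,x),x)\le kt$; property (3) follows by applying the mean value theorem of \cite{ke} along $t\mapsto\hh(t,x)$, since the pointwise descent rate is a convex combination of slopes $\le -2\epsilon(1+k^2)$. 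The main obstacle is precisely the passage from the non-equivariant construction of \cite{k} to an equivariant one: direct Bochner-type averaging of $\hh_0(t,l\cdot x)$ does not live on the manifold, so equivariance has to be built into the construction itself by averaging every auxiliary function and checking at each patching step, via Lemma~\ref{lem:inv}, that the local move commutes with the $\bl{L}$-action.
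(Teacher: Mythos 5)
Your plan is sound, but it takes a genuinely different, and more conservative, route than the paper. The paper keeps the non-equivariant deformation $\hh$ of \cite[Lemma 3.1]{k} untouched and symmetrizes it in a single stroke, setting $\hh_l(t,m)=\dit_{\bl{L}} l^{-1}\cdot \hh(t,l\cdot m)\,\dt{\mu}$ and checking equivariance by the translation invariance of the Haar measure; properties (1)--(3) are then asserted to carry over from \cite{k} without further comment. You instead rebuild the deformation so that equivariance is present at every stage: you average the Finsler structure, the partition of unity and the descent-direction field, and only then run the patched-flow construction. The trade-off is this: the paper's argument is one paragraph long, but $\dit_{\bl{L}} l^{-1}\cdot\hh(t,l\cdot m)\,\dt{\mu}$ is an integral of an $\fs{M}$-valued function and is undefined on a general Fr\'echet manifold --- precisely the objection you raise --- and even when $\fs{M}$ is linear one must still re-verify property (3) for the averaged map, which would need $\va$ of an average to be dominated by the average of $\va$, i.e.\ a convexity hypothesis that is not assumed. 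Your version averages only objects living in linear spaces (real-valued cut-offs, and tangent vectors in the Fr\'echet spaces $T_x\fs{M}$), where the integral is a legitimate Bochner integral; the descent estimate and the bound $\snorm[\fs{M},n]{h}^x\le 1$ both pass through the average by linearity of $h\mapsto\va'(x)h$ and convexity of the semiballs, so (2) and (3) come out of the construction rather than having to be rechecked afterwards. If you write this up, make two points explicit: the local move at $l\cdot x$ must be performed in the translated chart $\psi\circ l^{-1}$ (otherwise ``each local step commutes with the action'' is not automatic), and at a point lying in $l\cdot\vv_{x_j}$ for a continuum of $l$ the direction must be defined as the $\mu$-average of the transported vectors rather than by a choice of a single $l$.
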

\begin{cor}\label{cor:eqdef}
Let  $\varphi: M \to \rr$ be a Keller $C_c^1$ closed non-constant function. Suppose $\varphi$ is $ \bl{L} $-invariant and satisfies the Palais-Smale condition at all levels. 
\begin{enumerate}
	\item  If for $c \in \rr$ and $\delta > 0$ we have $$\varphi^{-1}[c-\delta, c+\delta] \cap \mathrm{Cr(\varphi)} = \emptyset,$$ 
	then there exists $t_1< t_0$ and $0< \epsilon < \delta$ such that
	\begin{equation}
	\hh (t_1, \varphi^{c+\epsilon}) \subset \varphi^{c-\epsilon}.
	\end{equation}
	\item \label{probo} If $\varphi$ has finitely many critical points, and for $c \in \rr$ if $U$ is an $ \bl{L} $-invariant, open neighborhood of $\mathrm{Cr}(\varphi,c)$ ($U =\emptyset$ if $\mathrm{Cr}(\varphi,c) = \emptyset$), then there exist $t_1 < t_0$ and $\epsilon > 0$ such
	that
	\begin{equation}
	\hh (t_1, \varphi^{c+\epsilon}\setminus U) \subset \varphi^{c-\epsilon}.
	\end{equation}
\end{enumerate}	
\end{cor}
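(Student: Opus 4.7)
The plan is to derive both assertions from Theorem \ref{th:eqdef} by using the Palais--Smale hypothesis to convert the local inequality $\Phi_\varphi < -2\epsilon(1+k^2)$ on a closed $\bl{L}$-invariant set $A$ into a global level drop along the equivariant deformation $\hh$.

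For Part (1), the first step is to produce a uniform gap: under $\varphi^{-1}[c-\delta,c+\delta] \cap \mathrm{Cr(\varphi)} = \emptyset$, I claim there exists $\eta > 0$ with $\Phi_\varphi(x) \leq -\eta$ for every $x$ in the slab. If not, a sequence $(x_n)$ in the slab with $\Phi_\varphi(x_n) \to 0^-$ would, after extracting a subsequence with $\varphi(x_n) \to c^\star \in [c-\delta,c+\delta]$ and applying the Palais--Smale condition at level $c^\star$, converge to some $x^\star$ with $\Phi_\varphi(x^\star) = 0$ (using that $\zero{x^\star} \in \fs{B}_{\infty}(\zero{x^\star})$ forces $\Phi_\varphi \leq 0$ everywhere while the continuity of $\varphi'$ propagates $\Phi_\varphi(x_n) \to 0$ to the limit), so $x^\star$ would be a critical point inside the slab---contradiction. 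Then pick $k > 1$ and $\epsilon \in (0,\delta)$ with $2\epsilon(1+k^2) < \eta$, set $A := \varphi^{-1}[c-\epsilon, c+\epsilon]$, $B := \fs{M} \setminus \varphi^{-1}(c-\delta, c+\delta)$, and apply Theorem \ref{th:eqdef}; choose $t_1 \in (0,t_0)$ with $2\epsilon(1+k^2)t_1 \geq 2\epsilon$. Property (3) then yields $\varphi(\hh(t_1,x)) \leq c-\epsilon$ for $x \in A$, while for $x \in \varphi^{c+\epsilon}$ with $\varphi(x) < c-\epsilon$ the trajectory is either frozen (when $x \in B$) or travels inside the transit zone $(c-\delta, c-\epsilon)$, where the flow underlying $\hh$ remains $\varphi$-non-increasing, and in either case $\varphi(\hh(t_1,x)) \leq c-\epsilon$.

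For Part (2), finiteness of $\mathrm{Cr(\varphi)}$ makes $\mathrm{Cr}(\varphi,c)$ a finite subset of $U$, so I would shrink $U$ to an $\bl{L}$-invariant open $V$ with $\mathrm{Cr}(\varphi,c) \subset V \subset \overline{V} \subset U$, and then pick $\delta > 0$ small enough that $\varphi^{-1}[c-\delta,c+\delta] \cap \mathrm{Cr(\varphi)} \subset V$ (possible because the critical set is discrete). The closed set $K := \varphi^{-1}[c-\delta,c+\delta] \setminus V$ is $\bl{L}$-invariant and disjoint from $\mathrm{Cr(\varphi)}$, so the Palais--Smale argument of Part (1) run on $K$ produces $\eta > 0$ with $\Phi_\varphi|_{K} \leq -\eta$. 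Taking $A := \varphi^{-1}[c-\epsilon,c+\epsilon] \setminus V$ and $B := \overline{V} \cup \bigl(\fs{M}\setminus \varphi^{-1}(c-\delta,c+\delta)\bigr)$---closed, disjoint, $\bl{L}$-invariant---an application of Theorem \ref{th:eqdef} produces an equivariant deformation $\hh$ fixing $\overline{V} \supset U$ (in the sense needed) and the outer complement of the slab, and the same calculation gives $\hh(t_1, \varphi^{c+\epsilon} \setminus U) \subset \varphi^{c-\epsilon}$.

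The main obstacle is verifying the compactness of $A$ that Theorem \ref{th:eqdef} requires: in an infinite-dimensional \fr Finsler setting, slabs such as $\varphi^{-1}[c-\epsilon,c+\epsilon]$ are rarely compact, so Palais--Smale must be exploited either to establish relative compactness of $A$ directly or to cover $A$ by finitely many compact pieces to which the theorem applies, and then glue the resulting equivariant local deformations using an $\bl{L}$-invariant partition of unity. A secondary, more routine issue is the bookkeeping in Part (1) for trajectories starting at $\varphi(x) < c-\epsilon$ that wander through the transit region without being fixed by $B$; here one must appeal to the explicit construction of $\hh$ from a $\varphi$-non-increasing vector field rather than to property (3) alone.
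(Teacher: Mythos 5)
Your strategy---use the $\pss$-condition to get a uniform bound $\Phi_\varphi\le-\eta$ on a critical-point-free slab and then feed that slab into Theorem \ref{th:eqdef}---is the classical one, but it does not close here, and the obstacle you flag at the end is fatal rather than routine. Theorem \ref{th:eqdef} requires the set $A$ on which the decrease estimate holds to be \emph{compact}; your choice $A=\varphi^{-1}[c-\epsilon,c+\epsilon]$ (or that set minus $V$ in Part (2)) is a closed slab in an infinite-dimensional \fr manifold and is essentially never compact. Neither of your suggested repairs works: the $\pss$-condition only compactifies sequences along which $\Phi_\varphi\to 0$, which is precisely the regime you have excluded on the slab, so it yields no relative compactness of the slab itself; and a non-compact closed set cannot be covered by \emph{finitely} many compact pieces, so there is nothing to glue with a partition of unity. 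The secondary issue you raise is likewise unresolved: points of $\varphi^{c+\epsilon}$ with $c-\delta<\varphi(x)<c-\epsilon$ lie in neither your $A$ nor your $B$, and the decrease estimate of Theorem \ref{th:eqdef} cannot be invoked for them without going back into the construction of $\hh$, which you do not do. The preliminary step (extracting $\eta>0$ from $\pss$ via upper semicontinuity of $\Phi_\varphi$ and the symmetry of $\fs{B}_\infty(\zero{x})$) is fine, but the proof as written does not establish the corollary.

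For comparison, the paper does not attempt this derivation at all. Its entire proof of Corollary \ref{cor:eqdef} is the remark that the statement is a modification of \cite[Corollary 3.5]{k}, where the non-equivariant version is established, together with one new verification: that the Haar average $\hh_l(t,m)=\int_{\bl{L}} l^{-1}\cdot \hh(t,l\cdot m)\,d\mu$ of the deformation of \cite[Lemma 3.1]{k} is $\bl{L}$-equivariant. That averaging computation is the only genuinely new content, and it is absent from your proposal; you instead take equivariance as already supplied by Theorem \ref{th:eqdef}, which is legitimate given its statement, but it leaves you carrying the full burden of reproving the non-equivariant deformation estimate---exactly the part where the compactness gap sits. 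Note also that in the only place the corollary is actually used (the proof of Theorem \ref{th:clg}) it is applied to compact minimax sets $\eu{A}=A\setminus\uu$ with $A\in\mathcal{K}_i$, which suggests that the compactness hypothesis is meant to be discharged on those sets rather than on full sublevel slabs; restricting your argument to such compact $\eu{A}$ would be the natural way to salvage it.
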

The proofs of Theorem \ref{th:eqdef} and Corollary \ref{cor:eqdef} are just modifications of the proofs of
 Lemma 3.1 and Corollary 3.5 in \cite{k}. We just need to show that if $ \hh $ is a deformation obtained in
 \cite[Lemma 3.1]{k}, then $ \hh_l(t,m) = \dit_{\bl{L}} l^{-1} \cdot \hh (t,l\cdot m) \dt{\mu}$ is $ \bl{L} $-equivalent. To see the latter, let $ g \in \bl{L}$. Then, 
\begin{align*}
 \hh_l(t,g \cdot m) &= \dit_{\bl{L}} l^{-1} \cdot \hh (t, g\cdot (l\cdot m)) \dt{\mu} \\
 &= g \cdot\dit_{\bl{L}}  (l * g)^{-1} \cdot \hh (t, ((l* g) \cdot  m) \dt{\mu} \\
 &= g \cdot  \hh_l(t, m).
\end{align*}

Now we extend the Ljusternik–Schnirelmann theorem \cite[Theorem 3.10]{k}. 

Given $ i \in \nn $, we define the sets
$$
\mathcal{K}_i \coloneq \Set{A \in \fs{M}}{A \, \text{is compact and} \, \bl{L}\text{-invariant with} \Ind(A) \geq i},
$$
and the values $ c_i $ by
$$
c_i \coloneq \inf_{A \in \mathcal{K}_i}\max_{x \in A}.
$$
\begin{theorem}\label{th:clg}
	Let $ \bl{L} $ be a compact lie group that acts on a $ C_c^1 $-\fr Finsler manifold $ M $.
	Let $ \va : \fs{M} \to \rr$ be a non-constant closed, $ \bl{L} $-invariant functional of class
	Keller $ C_c^1 $. Given any $ n \in \nn $ such that $ n \leq m $ for some $ m \in \nn $ such that
	$c \coloneq c_n = c_k > -\infty $. If $ \va $ satisfies the $ \pss $-condition, then 
	$ \Ind (\mathrm{Cr}(\varphi,c)) \geq m- n+1 $.
\end{theorem}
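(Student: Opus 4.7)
The plan is to mirror the classical Ljusternik--Schnirelmann minimax contradiction, with the $\bl{L}$-equivariant deformation from Corollary \ref{cor:eqdef}(2) and the $\bl{L}$-index $\Ind$ taking over the roles classically played by the pseudogradient flow and the Lusternik--Schnirelmann category. Concretely, I argue by contradiction: assume $\Ind(\mathrm{Cr}(\varphi,c)) \leq m - n$, and produce a set $B' \in \mathcal{K}_n$ on which $\max_{B'} \varphi < c = c_n$, contradicting the minimax definition of $c_n$.

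The first step is to prepare the input of the deformation corollary. The set $\mathrm{Cr}(\varphi,c)$ is $\bl{L}$-invariant: Lemma \ref{lem:inv} together with the $\bl{L}$-invariance of $\varphi$ shows that $l \cdot x$ is again a critical point at level $c$ whenever $x$ is. It is also compact by the $\pss$-condition, since any sequence in $\mathrm{Cr}(\varphi,c)$ trivially satisfies $\varphi(x_j)\to c$ and $\Phi_\varphi(x_j)\to 0$, hence has a convergent subsequence. Axiom (I.4) then provides an open $\bl{L}$-invariant neighborhood $\uu$ of $\mathrm{Cr}(\varphi,c)$ with $\Ind(\Cl{\uu}) \leq m - n$, and Corollary \ref{cor:eqdef}(2) applied to this $\uu$ yields $\epsilon > 0$, $t_1 < t_0$ and an $\bl{L}$-equivariant deformation $\hh$ with $\hh(t_1, \varphi^{c+\epsilon}\setminus \uu)\subset\varphi^{c-\epsilon}$.

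Next, by the definition of $c_m = c$ I pick $A \in \mathcal{K}_m$ with $\max_A \varphi \leq c+\epsilon$ and set $B := A \setminus \uu$. Because $\uu$ is open and $\bl{L}$-invariant, $B$ is compact and $\bl{L}$-invariant, and from $A \subset B \cup \Cl{\uu}$ together with subadditivity (I.2) and the bound on $\Ind(\Cl{\uu})$ one gets
$$
m \leq \Ind(A) \leq \Ind(B) + \Ind(\Cl{\uu}) \leq \Ind(B) + (m-n),
$$
so $\Ind(B) \geq n$, i.e.\ $B \in \mathcal{K}_n$. Put $B' := \hh(t_1, B)$: it is compact and, by the $\bl{L}$-equivariance of $\hh(t_1,\cdot)$, also $\bl{L}$-invariant; it lies in $\varphi^{c-\epsilon}$ by the deformation property; and axiom (I.3) applied to the $\bl{L}$-equivariant map $\hh(t_1,\cdot)\colon B \to B'$ gives $\Ind(B') \geq \Ind(B) \geq n$. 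Hence $B' \in \mathcal{K}_n$ and $c_n \leq \max_{B'} \varphi \leq c - \epsilon < c = c_n$, the desired contradiction.

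The main obstacle I anticipate is hidden in the setup of the deformation: Corollary \ref{cor:eqdef}(2) is stated under the assumption that $\varphi$ has finitely many critical points, whereas the theorem only supplies the $\pss$-condition and the resulting compactness of $\mathrm{Cr}(\varphi,c)$. I expect to have to revisit the proof of \cite[Corollary 3.5]{k} and run the same argument with the compact $\bl{L}$-invariant set $\mathrm{Cr}(\varphi,c)$ in place of an isolated finite set of critical points, then symmetrize the resulting (a priori non-equivariant) deformation via the averaging $\hh_l(t,m) = \dit_{\bl{L}} l^{-1}\cdot \hh(t,l\cdot m)\,\dt{\mu}$ recorded after Corollary \ref{cor:eqdef}, so that $\uu$ and $\hh$ can be chosen $\bl{L}$-invariant/equivariant simultaneously. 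Once this technical step is in place, the remainder of the proof is a formal play with axioms (I.1)--(I.4).
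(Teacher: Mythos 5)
Your proposal is correct and follows essentially the same minimax argument as the paper: compactness and $\bl{L}$-invariance of $\mathrm{Cr}(\varphi,c)$, axiom (I.4) for the invariant neighborhood $\uu$, excision plus subadditivity of $\Ind$, and the equivariant deformation pushing $A\setminus\uu$ below level $c$ — you merely organize it as a contradiction where the paper chains the inequalities directly. Your flag about Corollary \ref{cor:eqdef}(2) being stated only for finitely many critical points is well placed; the paper invokes that corollary here without addressing the mismatch either.
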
 
 
 \begin{proof}
 	From Definition \ref{def:ps} it follows directly that  the set $ \mathrm{Cr}(\varphi,c)) $ is compact and it is
 	$ \bl{L} $-invariant by Lemma \ref{lem:inv}.
 	In virtue of \eqref{def:i4} we may find an $ \bl{L} $-invariant, open \nbd $ \uu $ of
 	$ \mathrm{Cr}(\varphi,c)) $ such that $ \Ind(\Cl{U}) = \Ind (\mathrm{Cr}(\varphi,c))) $.
 	By the definitions of the values $ c_i $, we may find $ A \in \mathcal{K}_i $ such that
 	\begin{equation*}
 	\max_A \va \leq c + \epsilon.
 	\end{equation*}
 	Let $ \eu{A} = A \setminus \uu $. Then by \eqref{def:i2} and \eqref{def:i3} we will have
 	\begin{align} \label{eq:tg2}
 		i &\leq \Ind (A) \nonumber \\
 		  &\leq \Ind (\eu{A}) + \Ind(A \cap \Cl{\uu}) \nonumber \\
 		  &\leq \Ind (\eu{A}) + \Ind( \Cl{\uu}) \nonumber \\
 		  &= \Ind (\eu{A}) + \Ind (\mathrm{Cr}(\varphi,c))
 	\end{align}
 	Observe that $ \eu{A} \subseteq \va^{c+ \epsilon} \setminus \uu $. Thus, by Theorem there is an
 	$ \bl{L} $-equivalent deformation $ \hh $ such that by applying Corollary \ref{cor:eqdef} for some
 	$ 0 < t_0 <1 $ we will get
 	\begin{equation*}
 	\bl{A} \coloneq \hh (t_0, \eu{A}) \subseteq \va^{c-\epsilon}.
 	\end{equation*}
 	Since $ \eu{A} $ is compact and $ \bl{L} $-invariant, and $ \hh(t_0,\cdot) $ is $ \bl{L} $-equivalent,
 	it follows that $ \bl{A} $ is compact and $ \bl{L} $-invariant and 
 	\begin{equation*}
 	\max_A \va \leq c - \epsilon.
 	\end{equation*}
 	Moreover,  $  \Ind( \bl{A}) \leq n-1 $ by the definition of $ c_n=c $.
 	Thus, by \eqref{def:i3} 
 	\begin{equation}\label{eq:tg1}
 	\Ind (\eu{A}) \leq \Ind( \bl{A}) \leq n-1.
 	\end{equation}
 	It follows from \eqref{eq:tg1} and \eqref{eq:tg2} that 	$ \Ind (\mathrm{Cr}(\varphi,c)) \geq m- n+1 $.
 	Furthermore, $  {\mathrm{Cr}(\varphi,c)} \neq \emptyset$ by \eqref{def:i1}.
 \end{proof}
\bibliographystyle{amsplain}

\end{document}